\documentclass{amsart}

\usepackage{amsmath,amssymb,amsthm}

\hyphenation{mani-fold sub-mani-fold sub-mani-folds topo-logy}

\newtheorem{prop}{Proposition}[section]
\newtheorem{thm}[prop]{Theorem}

\newtheorem{cor}[prop]{Corollary}

\theoremstyle{definition}

\newtheorem{defn}[prop]{Definition}

\newtheorem{rem}[prop]{Remark}

\newtheorem*{ack}{Acknowledgement}


\def\co{\colon\thinspace}

\newcommand{\C}{\mathbb C}

\newcommand{\rmd}{\mathrm d}

\newcommand{\R}{\mathbb R}

\newcommand{\ra}{\rightarrow}

\newcommand{\wst}{\omega_{\st}}
\newcommand{\lamst}{\tfrac12(\mathbf{x}\,\rmd\mathbf{y}-\mathbf{y}\,\rmd\mathbf{x})}

\DeclareMathOperator{\eh}{\mathrm{EH}}

\DeclareMathOperator{\st}{\mathrm{st}}

\DeclareMathOperator{\FS}{\mathrm{FS}}


\begin{document}

\author{Kai Zehmisch}
\author{Fabian Ziltener}
\address{Mathematisches Institut, Universit\"at zu K\"oln, Weyertal 86--90, D-50931 K\"oln}
\email{kai.zehmisch@math.uni-koeln.de}
\address{Wiskunde Gebouw, Universiteit Utrecht, Budapestlaan 6, 3584 CD Utrecht}
\email{f.ziltener@uu.nl}

\title[Discontinuous symplectic capacities]{Discontinuous symplectic capacities}

\date{25.11.2013}

\begin{abstract}
  We show that the spherical capacity
  is discontinuous on a smooth family of ellipsoidal shells.
  Moreover,
  we prove that the shell capacity
  is discontinuous on a family of open sets
  with smooth connected boundaries.
\end{abstract}

\subjclass[2010]{53D35}
\thanks{
  The first named author is partially supported by DFG grant ZE 992/1-1.} 

\maketitle

\begin{center}
  \emph{Dedicated to the 90th birthday of Yvonne Choquet-Bruhat}
\end{center}


\section{Introduction\label{intro}}

In \cite{grom85} Gromov proved
that the open ball $B_r=B^{2n}_r$ of radius $r$
embeds symplectically
into the symplectic cylinder $Z_R=B^2_R\times\R^{2n-2}$
if and only if $r\leq R$.
This is the so-called
non-squeezing theorem.
This leads to the Gromov radius
\[
c_B(V,\omega)=\sup\{\pi r^2\,|\,
\text{$\exists$ symplectic embedding
$B_r\hookrightarrow (V,\omega)$}\}\;,
\]
which is a
normalized symplectic capacity.
Following Ekeland and Hofer \cite{ekho89}
we call an assignment
of a real number $c(V,\omega)\in [0,\infty]$
to a symplectic manifold $(V,\omega)$ of dimension $2n$
a 
{\bf normalized symplectic capacity}
provided the following conditions
are satisfied:
\begin{description}
\item[Monotonicity] If there
  exists a symplectic embedding $(V,\omega)\hookrightarrow (V',\omega')$,
  then $c(V,\omega)\leq c(V',\omega')$.
\item[Conformality] For any $a>0$ we have
  $c(V,a\omega)=a\, c(V,\omega)$.
\item[Normalization] $c(B_1)=\pi =c(Z_1)$.
\end{description}
We suppress the standard symplectic form
$\wst=\rmd\mathbf{x}\wedge\rmd\mathbf{y}$
on $\R^{2n}$ in the notation.

A {\bf symplectic embedding} of the
sphere $S^{2n-1}_r=\partial B_r$
of radius $r$
is a symplectic embedding
of a neighbourhood of $S^{2n-1}_r\subset\R^{2n}$.
It is natural to ask
whether a non-squeezing theorem
holds for
symplectic embeddings
$S^{2n-1}_r$ into $Z_R$.
Of course the answer is negative
for the dimension $2n=2$.
If $2n\geq4$
a positive answer was given in \cite{swozil12,geizeh12}.
It was shown
that these embeddings exist precisely if $r<R$.
The spherical non-squeezing theorem
is related to the
{\bf spherical capacity}
\[
 s(V,\omega):=\sup\{\pi r^2\,|\,
\text{$\exists$ symplectic embedding
  $S^{2n-1}_r\hookrightarrow (V,\omega)$}\}\;.
\]
This is a normalized symplectic capacity
for all symplectic manifolds $(V,\omega)$ of dimension $\geq4$,
see \cite{swozil12,geizeh12}.

In \cite[Problem 7]{chls07} the following question was raised:
Consider families of domains with smooth
boundary in a symplectic manifold.
Assume that the boundaries
fit into a smooth isotopy of embeddings.
Are capacities continuous on all
smooth families of domains bounded by smooth hypersurfaces?
The answer is \emph{no}:

\begin{thm}
  \label{mainthm}
  There is a smooth family $U_{\varepsilon}$, $\varepsilon\in(0,1)$,
  of ellipsoidal shells in $\R^{2n}$, $2n\geq 4$,
  such that for the spherical capacity $s$ the function
  $\varepsilon\mapsto s(U_{\varepsilon})$
  is not continuous.
\end{thm}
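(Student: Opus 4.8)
\emph{Proof idea.} I would reduce the spherical capacity of an ellipsoidal shell to a question about symplectic ball embeddings, and then construct a smooth family for which that question fails to be upper semicontinuous.

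Write $U_\varepsilon=\Int E^{\mathrm{out}}_\varepsilon\setminus\overline{E^{\mathrm{in}}_\varepsilon}$, with the closed ellipsoid $\overline{E^{\mathrm{in}}_\varepsilon}$ contained in the open ellipsoid $\Int E^{\mathrm{out}}_\varepsilon$. Suppose $\phi$ symplectically embeds a neighbourhood of $S^{2n-1}_r$ into $U_\varepsilon$ and put $\Sigma=\phi(S^{2n-1}_r)$. Pushing the standard Liouville vector field of $(\R^{2n},\wst)$ forward by $\phi$ exhibits $\Sigma$ as a hypersurface of contact type whose characteristic foliation is the Hopf foliation with all closed leaves of action $\pi r^2$. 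By the Jordan--Brouwer theorem $\Sigma$ bounds a compact domain $\overline D\subset\R^{2n}$, and $\Sigma\subset\Int E^{\mathrm{out}}_\varepsilon$ forces $D\subset\Int E^{\mathrm{out}}_\varepsilon$; the Eliashberg--Floer--McDuff filling theorem then identifies $(\overline D,\wst)$ with the closed ball $(\overline{B_r},\wst)$. Since $\overline{E^{\mathrm{in}}_\varepsilon}$ is connected and disjoint from $\Sigma$, it is either disjoint from $D$ --- so $D$ is a symplectic ball of capacity $\pi r^2$ inside $U_\varepsilon$ --- or contained in $D$ --- so $D$ is a symplectic ball of capacity $\pi r^2$ inside $\Int E^{\mathrm{out}}_\varepsilon$ whose image contains $\overline{E^{\mathrm{in}}_\varepsilon}$. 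The converse implications (pass to a slightly smaller concentric sphere) are routine. Hence, with $c_B$ the Gromov radius,
\[
 s(U_\varepsilon)=\max\bigl\{\,c_B(U_\varepsilon),\ W(U_\varepsilon)\,\bigr\},\qquad
 W(U_\varepsilon):=\sup\bigl\{\pi r^2:\ \exists\,B_r\hookrightarrow\Int E^{\mathrm{out}}_\varepsilon\ \text{with image}\supseteq\overline{E^{\mathrm{in}}_\varepsilon}\bigr\}.
\]

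Both $c_B$ and $W$ are lower semicontinuous along a smooth family of domains --- a compact subset of $U_{\varepsilon_0}$ lies in every nearby $U_\varepsilon$, so a fixed embedding, slightly shrunk, survives --- hence so is $s$. A discontinuity of $\varepsilon\mapsto s(U_\varepsilon)$ is therefore a downward jump at some $\varepsilon_0$, and it is enough to produce one for $W$. I would take $E^{\mathrm{out}}_\varepsilon$ of fixed Gromov radius $a$ and $E^{\mathrm{in}}_\varepsilon$ needle-shaped (strongly eccentric), varying smoothly so that $f(\varepsilon)$, the capacity of the smallest ball into which $E^{\mathrm{in}}_\varepsilon$ symplectically embeds, increases strictly and crosses $a$ at $\varepsilon_0$; here $f(\varepsilon)$ is computed by McDuff's theorem as the supremum over $k$ of the ratio of the $k$-th ECH capacities of $E^{\mathrm{in}}_\varepsilon$ and of the unit ball, and is continuous. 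For $\varepsilon<\varepsilon_0$ one has $f(\varepsilon)<a$: $E^{\mathrm{in}}_\varepsilon$ embeds into a ball of capacity $<a$, and a Lalonde--McDuff/Schlenk folding then wraps a ball of capacity arbitrarily close to $a$, lying inside $\Int E^{\mathrm{out}}_\varepsilon$, around the standard needle $\overline{E^{\mathrm{in}}_\varepsilon}$, so $W(U_\varepsilon)=a$. At $\varepsilon_0$, any ball containing $\overline{E^{\mathrm{in}}_{\varepsilon_0}}$ has capacity at least $f(\varepsilon_0)=a$ while any ball inside the open ellipsoid $\Int E^{\mathrm{out}}_{\varepsilon_0}$ has capacity strictly less than $a$, so no engulfing ball exists and $W(U_{\varepsilon_0})=0$. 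Arranging also that the needle blocks all nearly maximal balls in the shell --- e.g.\ by not elongating $E^{\mathrm{out}}_\varepsilon$ far beyond the critical needle length --- so that $c_B(U_\varepsilon)$ stays a fixed amount below $a$ near $\varepsilon_0$, one gets $s(U_\varepsilon)=a$ for $\varepsilon<\varepsilon_0$ but $s(U_{\varepsilon_0})=c_B(U_{\varepsilon_0})<a$.

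The main obstacle is the folding construction for $\varepsilon<\varepsilon_0$: producing a symplectic ball of capacity close to $a$ inside $\Int E^{\mathrm{out}}_\varepsilon$ whose image covers the \emph{standard} needle at the origin --- equivalently, extending a symplectic embedding of $\overline{E^{\mathrm{in}}_\varepsilon}$ into a small ball to a symplectic embedding of a near-maximal ball into $E^{\mathrm{out}}_\varepsilon$. This calls for a connectivity/isotopy input for ellipsoid embeddings (in the spirit of McDuff's uniqueness results) together with enough ambient room for the fold, and obtaining both at once, smoothly in $\varepsilon$ and with $U_\varepsilon$ an honest shell throughout, is the substantial part. Lesser points are: confirming the obstruction at $\varepsilon_0$ is genuinely attained (the open-versus-closed refinement of the ellipsoid embedding results); and, for $2n>4$, replacing the ECH computation of $f$ by stabilised ball-engulfing obstructions and the Eliashberg--Floer--McDuff step by its higher-dimensional counterpart.
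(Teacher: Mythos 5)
Your reduction is a genuinely different route from the paper's, but it has gaps that are not just technical. The most serious one is the very first step: to write $s(U_\varepsilon)=\max\{c_B(U_\varepsilon),W(U_\varepsilon)\}$ you need the bounded component $\overline D$ of the complement of the embedded sphere to be \emph{symplectomorphic} to $\overline{B_r}$. The Eliashberg--Floer--McDuff theorem gives a diffeomorphism (and only in the settings where it applies); the symplectic identification of a filling of the standard contact sphere with the standard ball is not available, and in dimensions $2n>4$ nothing of the sort is known. The paper deliberately avoids this: from the contact-type hypersurface $S=\varphi(S^{2n-1}_r)$ it extracts only two weak invariants of $D$ --- its volume, via the helicity/Stokes identity $|D|=|B_r|$, and its $n$-th Ekeland--Hofer capacity, which by \cite[Proposition 2]{ekho90} equals $m\pi r^2$ for some positive integer $m$ --- and these, combined with monotonicity of $c_n^{\eh}$ and the spherical non-squeezing theorem, already force the dichotomy. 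Beyond this, your construction has further unproved or false ingredients: (i) the folding step producing a ball of capacity close to $a$ inside $\Int E^{\mathrm{out}}_\varepsilon$ whose image engulfs the standard needle is asserted, not proved, and you yourself flag it as the main obstacle --- such engulfing embeddings are not delivered by Lalonde--McDuff/Schlenk folding, which controls where the image \emph{fits}, not what it \emph{contains}; (ii) the claim that every ball inside the open ellipsoid $\Int E^{\mathrm{out}}_{\varepsilon_0}$ has capacity strictly less than $a$ is false, since the open ball of capacity exactly $a$ includes into the open ellipsoid by the standard inclusion, so $W(U_{\varepsilon_0})=0$ does not follow; (iii) the ECH computation of $f$ and the ``stabilised'' obstructions have no counterpart for $2n>4$, where ellipsoid-into-ball embedding questions are open. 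Also, lower semicontinuity of capacities along a smooth family is not automatic: a compact subset of $U_{\varepsilon_0}$ need not lie in nearby $U_\varepsilon$ unless the family is monotone.

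For contrast, the paper's argument is short and entirely elementary given the two black boxes (Ekeland--Hofer capacities of ellipsoids and the spherical non-squeezing theorem of \cite{swozil12,geizeh12}). It takes $U_\varepsilon=(1+\varepsilon)E\setminus\overline{(1-\varepsilon)E}$ for a \emph{fixed} ellipsoid $E$ with $1=r_1\leq\ldots\leq r_n=R<\sqrt[2n]{2}$ and two distinct axes. If the bounded component $D$ lies in the shell, the helicity identity bounds $\pi r^2$ by a volume expression that is small for small $\varepsilon$. If instead $D\supset(1-\varepsilon)E$, then $1-\varepsilon<r$ by volume, $c_n^{\eh}(D)=\pi r^2$ once $\varepsilon$ is small enough to force $m=1$, whence $(1-\varepsilon)R\leq r<1+\varepsilon$; this is impossible for $\varepsilon\leq\varepsilon_0=\frac{R-1}{R+1}$. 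Since spheres of radius $\frac{2R}{R+1}$ fit in $U_\varepsilon$ for $\varepsilon>\varepsilon_0$, the capacity jumps up at $\varepsilon_0$, with the hypothesis $R<\sqrt[2n]{2}$ guaranteeing the volume bound at $\varepsilon_0$ is strictly below the lower bound just beyond it. If you want to salvage your approach, you would at minimum need to replace the symplectomorphism-to-a-ball step by capacity- or volume-type invariants of $D$, at which point you are essentially reconstructing the paper's proof.
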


We prove this result in Section \ref{does}.
Based on the first version of this article
Latschev provided us with the following example
of a discontinuous capacity:
Let $\delta\in(0,1)$.
We define the closed spherical shell
\[
A_{\delta}=
\overline{B_1}\setminus B_{1-\delta}\subset\R^{2n}\;.
\]
Set 
\[
c_{\delta}(V,\omega):=\sup\{\pi r^2\,|\,
\text{$\exists$ symplectic embedding
  $r A_{\delta}\hookrightarrow (V,\omega)$}\}\;.
\]
Observe that $c_{\delta}\leq s$
in dimensions $2n\geq4$.
Therefore, $c_{\delta}$
is a normalized capacity.
We call $c_{\delta}$ the $\delta$-{\bf shell capacity}.
For the open spherical shells
\[
U_{\varepsilon}=
B_1\setminus\overline{B_{1-\varepsilon}}\subset\R^{2n}\;,
\]
$\varepsilon\in(0,1)$,
we have the following:

\begin{thm}
  \label{janko1}
  If $2n\geq4$ then the function
  $\varepsilon\mapsto c_{\delta}(U_{\varepsilon})$
  is discontinuous at $\delta$.
\end{thm}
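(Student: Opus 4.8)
The plan is to establish two facts — that $c_\delta(U_\varepsilon)=\pi$ for all $\varepsilon\in(\delta,1)$, and that $c_\delta(U_\delta)<\pi$ — and then note that they force discontinuity at $\delta$: since $U_\varepsilon\subset U_\delta$ for $\varepsilon\le\delta$, Monotonicity gives $c_\delta(U_\varepsilon)\le c_\delta(U_\delta)<\pi$ on $(0,\delta]$, so the right-hand limit of $\varepsilon\mapsto c_\delta(U_\varepsilon)$ at $\delta$ is $\pi$ while its value there is strictly smaller. The first fact is easy: if $\varepsilon>\delta$ and $\tfrac{1-\varepsilon}{1-\delta}<r<1$, then $rA_\delta=\overline{B_r}\setminus B_{r(1-\delta)}$ is contained in $B_1\setminus\overline{B_{1-\varepsilon}}=U_\varepsilon$ because $r<1$ and $r(1-\delta)>1-\varepsilon$; hence $c_\delta(U_\varepsilon)\ge\pi r^2$, and $r\to 1$ gives $c_\delta(U_\varepsilon)\ge\pi$. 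Conversely $U_\varepsilon\subset B_1$, so Monotonicity and Normalization give $c_\delta(U_\varepsilon)\le c_\delta(B_1)=\pi$.

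For the second fact, let $\phi\co rA_\delta\hookrightarrow U_\delta$ be a symplectic embedding. Its image is a compact subset of the open set $U_\delta$, so $r^{2n}\mathrm{vol}(A_\delta)=\mathrm{vol}\bigl(\phi(rA_\delta)\bigr)<\mathrm{vol}(U_\delta)=\mathrm{vol}(A_\delta)$, forcing $r<1$; put $a:=1/r>1$. I would rescale to the symplectic embedding $\psi\co A_\delta\hookrightarrow\tfrac1rU_\delta=B_a\setminus\overline{B_{a(1-\delta)}}=:\Omega_a$, $\psi(x):=\tfrac1r\phi(rx)$ (symplectic, since its differential at $x$ equals that of $\phi$ at $rx$). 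The crux is a rigidity property of the round boundary spheres of $A_\delta$: \emph{for $\rho\in\{1-\delta,1\}$, if $\mathcal I_\rho$ denotes the bounded component of $\R^{2n}\setminus\psi(S^{2n-1}_\rho)$, then $\mathrm{vol}(\mathcal I_\rho)=\rho^{2n}\mathrm{vol}(B^{2n}_1)$, the same volume as the round ball $B_\rho$.} To see this, note that $\psi^*\lamst-\lamst$ is closed on $A_\delta$ and that, because $2n\ge4$, the shell $A_\delta$ deformation retracts onto $S^{2n-1}$ with $H^1(S^{2n-1};\R)=0$; hence $\psi^*\lamst-\lamst=\rmd f$ for some function $f$ on $A_\delta$. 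Using $\wst^n=\rmd(\lamst\wedge\wst^{n-1})$, Stokes' theorem then gives
\[
n!\,\mathrm{vol}(\mathcal I_\rho)=\Bigl|\int_{\partial\mathcal I_\rho}\lamst\wedge\wst^{n-1}\Bigr|=\Bigl|\int_{S^{2n-1}_\rho}\lamst\wedge\wst^{n-1}\Bigr|=n!\,\rho^{2n}\,\mathrm{vol}(B_1),
\]
the middle equality holding because $\psi^*(\lamst\wedge\wst^{n-1})-\lamst\wedge\wst^{n-1}=\rmd(f\,\wst^{n-1})$ integrates to zero over the closed sphere $S^{2n-1}_\rho$. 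This is exactly where $2n\ge4$ enters; for $2n=2$ one has $H^1(S^1)\neq0$, the statement is false, and indeed $c_\delta(U_\delta)=\pi$ then.

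Now I would run the volume comparison. Since $\psi(S^{2n-1}_{1-\delta})\subset\Omega_a\subset B_a$ and $\R^{2n}\setminus B_a$ is connected, $\mathcal I_{1-\delta}\subset B_a$. From $\mathrm{vol}(\mathcal I_{1-\delta})<\mathrm{vol}(\mathcal I_1)$ and the Schoenflies theorem one sees that $\psi(S^{2n-1}_{1-\delta})$ is the inner boundary of the embedded shell $\psi(A_\delta)$, i.e. $\mathcal I_{1-\delta}\cap\psi(A_\delta)=\emptyset$. The central hole $\overline{B_{a(1-\delta)}}$ of $\Omega_a$ is connected and disjoint from $\psi(S^{2n-1}_{1-\delta})\subset\Omega_a$, so it lies in a single component of the complement; it cannot lie in $\mathcal I_{1-\delta}$, since then $\mathrm{vol}(\mathcal I_{1-\delta})\ge\mathrm{vol}(B_{a(1-\delta)})=a^{2n}(1-\delta)^{2n}\mathrm{vol}(B_1)>(1-\delta)^{2n}\mathrm{vol}(B_1)$ would contradict the identity above (using $a>1$). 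Hence $\mathcal I_{1-\delta}\cap\overline{B_{a(1-\delta)}}=\emptyset$, so $\mathcal I_{1-\delta}\subset B_a\setminus\overline{B_{a(1-\delta)}}=\Omega_a$, and together with $\mathcal I_{1-\delta}\cap\psi(A_\delta)=\emptyset$ this yields
\[
(1-\delta)^{2n}\mathrm{vol}(B_1)=\mathrm{vol}(\mathcal I_{1-\delta})\le\mathrm{vol}(\Omega_a)-\mathrm{vol}(A_\delta)=(a^{2n}-1)\bigl(1-(1-\delta)^{2n}\bigr)\mathrm{vol}(B_1).
\]
Thus $a^{2n}\ge\bigl(1-(1-\delta)^{2n}\bigr)^{-1}$, i.e. $r=1/a\le\bigl(1-(1-\delta)^{2n}\bigr)^{1/(2n)}<1$, so $c_\delta(U_\delta)\le\pi\bigl(1-(1-\delta)^{2n}\bigr)^{1/n}<\pi$, which is the second fact.

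The main obstacle is the rigidity lemma, and more precisely recognising the right quantity to control: the volume of the region \emph{enclosed} by $\psi(S^{2n-1}_{1-\delta})$. This is not the symplectic image of any subset of the domain — $\psi$ is undefined on the hole $B_{1-\delta}$ — yet it is pinned down exactly by the exactness of $\psi^*\lamst-\lamst$, which is the one place the hypothesis $2n\ge4$ is genuinely used. Everything else is routine: the first fact and the reduction are formal, and the concluding estimate is bookkeeping with volumes of round shells, the only delicate points being the orientation signs in Stokes' theorem and the identification of $\mathcal I_{1-\delta}$ as the hole of $\psi(A_\delta)$, both of which are settled by positivity of volume together with $\mathrm{vol}(\mathcal I_{1-\delta})<\mathrm{vol}(\mathcal I_1)$.
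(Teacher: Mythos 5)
Your proof is correct and follows essentially the same route as the paper's: the Stokes/helicity computation (valid because $H^1(S^{2n-1};\R)=0$ for $2n\ge4$) pins down the volumes enclosed by the two image spheres, and the dichotomy on where the hole $\overline{B_{1-\varepsilon}}$ sits, combined with volume comparison, yields the identical bound $c_{\delta}(U_{\delta})\le\sqrt[n]{1-(1-\delta)^{2n}}\,\pi<\pi=c_{\delta}(U_{\varepsilon})$ for $\varepsilon>\delta$. The only cosmetic differences are your rescaling by $1/r$ and the fact that the paper runs the case analysis for general $\varepsilon$, showing that the ``hole inside the inner sphere'' configuration forces $\varepsilon>\delta$, whereas you rule it out directly at $\varepsilon=\delta$.
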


A similar argument
as in Theorem \ref{janko1}
shows discontinuity of the
$\delta$-shell capacity for a family of subsets of $\C P^n$,
$2n\geq 4$,
with connected boundary.
We provide $\C P^n$ with the Fubini-Study form
$\omega_{\FS}$
normalized such that
the complement of $\C P^{n-1}$ in $\C P^n$
is the symplectic image
of the open unit ball
$B_1$.
We identify both sets
and define
\[
V_{\varepsilon}=\C P^n\setminus\overline{B_{1-\varepsilon}}
\]
for $\varepsilon\in(0,1)$.
Observe that the closure of $V_{\varepsilon}$
is a concave filling of the
sphere $\partial V_{\varepsilon}$
of radius $({1-\varepsilon})$.
The analogue of Theorem \ref{janko1} holds true.

\begin{thm}
  \label{janko2}
  If $2n\geq4$ then the function
  $\varepsilon\mapsto c_{\delta}(V_{\varepsilon})$
  is discontinuous at $\delta$.
\end{thm}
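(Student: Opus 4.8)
The plan is to follow the proof of Theorem~\ref{janko1}, adding the bookkeeping forced by the line at infinity $\C P^{n-1}\subset\C P^{n}$. I will show that $c_{\delta}(V_{\varepsilon})\geq\pi$ for $\delta<\varepsilon<1$ and that $c_{\delta}(V_{\varepsilon})\leq\pi\bigl(1-(1-\varepsilon)^{2n}\bigr)^{1/n}<\pi$ for $0<\varepsilon<\delta$; then the right-hand limit of $\varepsilon\mapsto c_{\delta}(V_{\varepsilon})$ at $\delta$ is at least $\pi$ while its left-hand limit is strictly below $\pi$, so continuity at $\delta$ fails regardless of the value $c_{\delta}(V_{\delta})$. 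The lower bound is immediate: for $\tfrac{1-\varepsilon}{1-\delta}<r<1$ one has $rA_{\delta}=\overline{B_{r}}\setminus B_{r(1-\delta)}\subset B_{1}\setminus\overline{B_{1-\varepsilon}}=U_{\varepsilon}\subset V_{\varepsilon}$, so $c_{\delta}(V_{\varepsilon})\geq\pi r^{2}$ for all such $r$, and the supremum over these $r$ equals $\pi$ when $\varepsilon>\delta$ (the reverse inequality will drop out below).

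For the upper bound, fix $0<\varepsilon<\delta$ and a symplectic embedding $\phi\colon rA_{\delta}\hookrightarrow V_{\varepsilon}$; the task is to bound $r$ away from $1$. The central object is $\Sigma=\phi(S^{2n-1}_{r(1-\delta)})$, the image of the inner boundary sphere. Since $\phi$ pushes the standard Liouville form $\lamst$ forward to a primitive of $\omega_{\FS}$ near $\Sigma$ and the associated Liouville field stays transverse to $\Sigma$, the hypersurface $\Sigma$ is of contact type in $\C P^{n}$ and is contactomorphic to $(S^{2n-1},\xi_{\st})$. As $2n\geq4$, the sphere $S^{2n-1}$ is $2$-connected, so $\Sigma$ splits $\C P^{n}$ into a compact convex region $\overline{P}$ and a compact concave region $\overline{Q}$ containing $\phi(rA_{\delta})$, and a Mayer--Vietoris argument shows that exactly one of $\overline{P}$, $\overline{Q}$ carries the generator of $H_{2}(\C P^{n})$. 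The key claim, addressed at the end, is that it is $\overline{Q}$, so that $\overline{P}$ is symplectically aspherical; granting this, the Eliashberg--Floer--McDuff theorem (in the form used in \cite{swozil12,geizeh12}) provides a symplectomorphism $\overline{P}\cong\overline{B_{r(1-\delta)}}$ agreeing with $\phi^{-1}$ near $\Sigma$. Gluing it to $\phi$ along $\Sigma$ extends $\phi$ to a symplectic embedding $\Phi\colon\overline{B_{r}}\hookrightarrow\C P^{n}$ with $\Phi(\overline{B_{r(1-\delta)}})=\overline{P}$ and $\Phi(\overline{B_{r}})=\overline{P}\cup\phi(rA_{\delta})$; in particular $B_{r}$ embeds symplectically into $\C P^{n}$, so $r\leq1$, the Gromov width of $(\C P^{n},\omega_{\FS})$ being $\pi$.

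It remains to place the deleted ball $\overline{B_{1-\varepsilon}}$. It is disjoint from $\phi(rA_{\delta})$, hence lies in one of the two components $P$ and $\C P^{n}\setminus(\overline{P}\cup\phi(rA_{\delta}))$ of $\C P^{n}\setminus\phi(rA_{\delta})$. In the first case $\Phi^{-1}$ embeds $\overline{B_{1-\varepsilon}}$ symplectically into $\Phi^{-1}(\overline{P})=\overline{B_{r(1-\delta)}}$, forcing $1-\varepsilon\leq r(1-\delta)$, i.e.\ $r\geq\tfrac{1-\varepsilon}{1-\delta}>1$ since $\varepsilon<\delta$, which contradicts $r\leq1$; so this case cannot occur. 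In the second case $\Phi(\overline{B_{r}})$ and $\overline{B_{1-\varepsilon}}$ are disjoint symplectic balls in $\C P^{n}$, so comparing symplectic volumes (or, more sharply, by Gromov's two-ball packing inequality) one gets $r^{2n}+(1-\varepsilon)^{2n}\leq1$, whence $\pi r^{2}\leq\pi\bigl(1-(1-\varepsilon)^{2n}\bigr)^{1/n}<\pi$. Taking the supremum over all admissible $\phi$ gives $c_{\delta}(V_{\varepsilon})\leq\pi\bigl(1-(1-\varepsilon)^{2n}\bigr)^{1/n}<\pi$ for every $0<\varepsilon<\delta$, as required.

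The hard part is the claim that $\overline{P}$ is symplectically aspherical --- that the generator of $H_{2}(\C P^{n})$ lies on the concave side --- because this is what makes Eliashberg--Floer--McDuff applicable and lets $\phi$ genuinely extend over a round ball. When $\phi(rA_{\delta})$ is disjoint from $\C P^{n-1}$ it is automatic: then $\phi(rA_{\delta})\subset\C P^{n}\setminus\C P^{n-1}=B_{1}$, so $\phi$ already factors through $U_{\varepsilon}$ and the bound $\pi r^{2}<\pi$ is Theorem~\ref{janko1} itself, whose proof takes place inside $\R^{2n}$ where asphericity is free. The delicate situation is when $\phi(rA_{\delta})$ meets $\C P^{n-1}$, so that part of the line at infinity is trapped on the concave side $\overline{Q}$; one must then rule out the homology generator --- equivalently, a symplectically aspherical concave filling of $(S^{2n-1},\xi_{\st})$ --- living on the convex side $\overline{P}$. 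That is where the argument requires the most care, and it is the $\C P^{n}$-analogue of the one subtle point already hidden in the proof of Theorem~\ref{janko1}.
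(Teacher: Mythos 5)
Your overall strategy (lower bound $\pi$ for $\varepsilon>\delta$, upper bound strictly below $\pi$ for $\varepsilon<\delta$) matches the paper's, but your proof of the upper bound has a genuine gap that you yourself flag and never close: the ``key claim'' that the generator of $H_2(\C P^n)$ lies on the concave side of $\Sigma=\phi(S^{2n-1}_{r(1-\delta)})$, so that the convex piece $\overline{P}$ is symplectically aspherical. Everything downstream --- Eliashberg--Floer--McDuff, the extension of $\phi$ over a ball, the bound $r\leq 1$ via the Gromov width, the two-ball volume estimate --- hinges on that claim, and your final paragraph only records that it ``requires the most care.'' Moreover, even granting the claim, the Eliashberg--Floer--McDuff theorem produces a \emph{diffeomorphism} of a symplectically aspherical filling of the standard contact sphere with the ball; it does not produce a \emph{symplectomorphism} $\overline{P}\cong\overline{B_{r(1-\delta)}}$ agreeing with $\phi^{-1}$ near $\Sigma$, which is what you need in order to glue and to quote the Gromov width of $\C P^n$. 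As written, the upper bound is not established.

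The paper's proof avoids all of this machinery. It extends the helicity argument of Theorem \ref{mainthm}: for a symplectic image $S$ of a sphere of radius $\varrho$ in $\C P^n$, Mayer--Vietoris shows that $\omega_{\FS}$ admits a primitive on exactly one component $D$ of $\C P^n\setminus S$, and Stokes's theorem applied to the pushed-forward primitive of $\wst$ gives the exact identity $|D|=|B_{\varrho}|$. The dichotomy ``the interior component of $\varphi(S^{2n-1}_r)$ lies in $V_{\varepsilon}$'' versus ``$B_{1-\varepsilon}$ lies in the interior component of $\varphi(rS^{2n-1}_{1-\delta})$'' then reduces everything to the volume computations already done for Theorem \ref{janko1}; no filling theorem, no holomorphic curves, no packing inequality. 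Incidentally, this same volume identity is what would close your gap: if the generator sat on the convex side of $\Sigma$, the exact component of $\C P^n\setminus\Sigma$ would contain both the shell image and the exact component of $\C P^n\setminus\phi(S^{2n-1}_r)$, forcing $|B_{r(1-\delta)}|\geq |B_r|+\bigl(|B_r|-|B_{r(1-\delta)}|\bigr)$, which is absurd. But once that identity is in hand, the heavy input of your second paragraph is no longer needed at all.
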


A further variant of the argument
which involves the holomorphic
analysis from \cite{geizeh12}
shows discontinuity on a family
of symplectic fillings of spheres.
Consider the monotone symplectic manifold
$(V,\omega)$
obtained by blowing up
the unit ball in $\R^{2n}$.
For $t>0$ we define $U_t$
to be the corresponding blowup of $B_{1+t}$.
Let $\tau\in(0,\sqrt[2n]{2}-1)$.
Define
\[
c^{\tau}(W,\omega):=\sup\big\{\pi r^2\,|\,
\text{$\exists$ symplectic embedding
  $r\big(\overline{B_{1+\tau}}\setminus B_1\big)\hookrightarrow (W,\omega)$}\big\}\;.
\]

\begin{thm}
  \label{janko3}
  If $2n\geq4$ then the function
  $t\mapsto c^{\tau}(U_t)$
  is discontinuous at $\tau$.
\end{thm}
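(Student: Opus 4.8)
We may take $U_t$ to be the blow‑up of $B_{1+t}$ at the origin with weight $1$, so that the exceptional divisor $E\cong\C P^{n-1}$ has every line of symplectic area $\pi$, the complement $U_t\setminus E$ is symplectomorphic to the round shell $\{1<|z|<1+t\}\subset\R^{2n}$, and consequently $\mathrm{vol}(U_t)=\mathrm{vol}(B_{1+t})-\mathrm{vol}(B_1)$. The plan is to show that $c^{\tau}(U_t)>\pi$ for every $t>\tau$ while $c^{\tau}(U_{\tau})<\pi$; discontinuity at $\tau$ is then immediate.

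For the first point: whenever $1<r<\tfrac{1+t}{1+\tau}$ — a nonempty range precisely because $t>\tau$ — the rescaled shell $r(\overline{B_{1+\tau}}\setminus B_1)=\{r\le|z|\le r(1+\tau)\}$ is literally contained in $\{1<|z|<1+t\}\cong U_t\setminus E\subset U_t$. Hence $c^{\tau}(U_t)\ge\pi\big(\tfrac{1+t}{1+\tau}\big)^2>\pi$, and in particular $\liminf_{t\to\tau^+}c^{\tau}(U_t)\ge\pi$.

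The heart of the matter is the estimate $c^{\tau}(U_{\tau})\le\tfrac{\pi}{(1+\tau)^2}$. Suppose $\iota\co\{r\le|z|\le R\}\hookrightarrow U_{\tau}$ is a symplectic embedding with $R=r(1+\tau)$, and let $\mathrm{Sh}$ be its image. Since $\mathrm{Sh}$ is symplectomorphic to a domain in $\R^{2n}$ it contains no closed symplectic submanifold, so $E$ lies in a single component of $U_{\tau}\setminus\mathrm{Sh}$. Consider first the case in which $\mathrm{Sh}$ does not separate $E$ from $\partial U_{\tau}$; then $\mathrm{Sh}$ lies inside a ball of $U_{\tau}$, and bookkeeping the concave/convex ends of the shell forces the component $\Omega$ of $U_{\tau}\setminus\mathrm{Sh}$ containing neither $E$ nor $\partial U_{\tau}$ to be bounded by the inner sphere $\iota(S^{2n-1}_r)$, hence to be a strong symplectic filling of $(S^{2n-1},\xi_{\mathrm{std}})$ of size $r$ with no closed symplectic submanifold. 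By the Eliashberg--Floer--McDuff theorem together with the holomorphic‑curve analysis of \cite{geizeh12} characterising such fillings, $\Omega$ is symplectomorphic to $\overline{B_r}$, so $\Omega\cup\mathrm{Sh}\cong\overline{B_R}$. This ball is disjoint from $E$, so it lies in $U_{\tau}\setminus E\cong\{1<|z|<1+\tau\}$ and is disjoint there from $\overline{B_1}$; thus $\overline{B_R}$ and $\overline{B_1}$ embed disjointly into $B_{1+\tau}$, and comparing volumes yields $R^{2n}+1\le(1+\tau)^{2n}<2$, the last inequality being exactly the hypothesis $\tau<\sqrt[2n]{2}-1$. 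Therefore $R<1$, i.e. $r<\tfrac{1}{1+\tau}$. In the remaining case $\mathrm{Sh}$ separates $E$ from $\partial U_{\tau}$, so $E$ lies in the bounded side of $\mathrm{Sh}$, a strong filling of $(S^{2n-1},\xi_{\mathrm{std}})$ of size $r$ carrying an exceptional divisor of weight $1$; blowing that divisor down and invoking Eliashberg--Floer--McDuff exhibits this side as a weight‑$1$ blow‑up of $\overline{B_r}$, which requires $r\ge1$, whereas $\mathrm{vol}(\mathrm{Sh})\le\mathrm{vol}(U_{\tau})$ reads $r^{2n}\big((1+\tau)^{2n}-1\big)\le(1+\tau)^{2n}-1$, so $r\le1$. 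Hence $r=1$, a degenerate value at which the closed shell cannot embed, so this case is vacuous. Altogether $c^{\tau}(U_{\tau})\le\tfrac{\pi}{(1+\tau)^2}<\pi$.

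Combining the two parts, $c^{\tau}(U_{\tau})<\pi\le\liminf_{t\to\tau^+}c^{\tau}(U_t)$, so $c^{\tau}$ fails to be continuous at $\tau$. The one substantial obstacle is the input from holomorphic curves in the case analysis above: one must know that the round sphere $\iota(S^{2n-1}_r)$ occurring as the boundary of a component of $U_{\tau}\setminus\mathrm{Sh}$ bounds, on the appropriate side, a symplectic filling of the expected size, and that minimal such fillings are standard balls — this is precisely where the analysis of \cite{geizeh12}, and the Eliashberg--Floer--McDuff theorem underlying it, are used, the remainder being the convexity bookkeeping and a volume count in which the constraint $\tau<\sqrt[2n]{2}-1$ enters.
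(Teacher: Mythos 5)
Your proof has the same architecture as the paper's: the dichotomy according to whether the exceptional divisor $E$ lies on the bounded side of the embedded shell, a Stokes/helicity volume count when it does not, and holomorphic-curve input when it does. (Working with the jump between $c^{\tau}(U_{\tau})$ and the right-hand limit, rather than between the left-hand limit and the values for $t>\tau$, is a harmless variant.) Two steps, however, are asserted rather than proved, and the first is a genuine gap. In the separating case you declare that the component containing $E$ is a \emph{convex} filling bounded by the image of the \emph{inner} sphere $S^{2n-1}_r$. In the non-separating case the analogous co-orientation claim does follow from ``bookkeeping'', because there the relevant component lies in the exact part $U_{\tau}\setminus E\cong B_{1+\tau}\setminus\overline{B_1}$ and one can invoke the fact (the paper's helicity remark) that a contact-type sphere bounding a relatively compact domain in an \emph{exact} symplectic manifold has its Liouville field pointing out of that domain. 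The component containing $E$ is not contained in an exact piece, so that argument is unavailable; a priori this component could be bounded by the image of the outer sphere as a concave cap, in which case your blow-down/volume comparison collapses (caps do not satisfy the volume identity $|D|=|B_r|$ --- compare $\C P^n\setminus B_\varrho$). Ruling this out is precisely what the paper's citation of \cite{mcd91} and \cite{geizeh12} at the corresponding step supplies: $\overline{U_t}\setminus D_{r(1+\tau)}$ is an \emph{exact} cobordism with negative end the outer sphere and positive end $\partial B_{1+t}$. You must add this.

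Second, ``$\Omega$ is symplectomorphic to $\overline{B_r}$'' is not a known theorem for $2n\geq6$: Eliashberg--Floer--McDuff and the analysis of \cite{geizeh12} give the diffeomorphism type of an aspherical filling (hence, via exactness and Stokes, its volume), not its symplectic structure. In your first case only the volume of the region bounded by the image of $S^{2n-1}_{r(1+\tau)}$ is used, and that follows from helicity alone with no filling theorem, so this is cosmetic. In the separating case the diffeomorphism type of the blown-down domain is genuinely needed (to get $H^1=0$ and hence $|\widehat D|=|B_r|$, whence $r>1$), and for that you must also verify symplectic asphericality of the blow-down, i.e.\ that $H_2$ of the component containing $E$ is generated by the line in $E$ --- a short Mayer--Vietoris computation you omit. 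Granting these repairs, your treatment of the hard case (blow down $E$, apply the filling theorem, compare with the volume bound $r<1$ coming from $|\,r(\overline{B_{1+\tau}}\setminus B_1)\,|<|U_\tau|$) is a legitimate alternative to the paper's route via \cite[Theorem 3.1]{geizeh12} (action bound in the exterior cobordism, giving $r(1+\tau)<1+t$) and the moduli-space argument of \cite[Theorem 6.4]{geizeh12} (giving $r>1$); note the contradiction is cleaner than you state, since both inequalities are strict and no ``degenerate value $r=1$'' needs separate discussion.
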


The proofs of Theorem \ref{janko1},
\ref{janko2}, and \ref{janko3}
are given in Section \ref{jankogenial}.
In Section \ref{acompres} we show that the spherical capacity
and a related contact type embedding capacity
are different.


\section{Discontinuity on ellipsoidal shells\label{does}}

The aim of this section is to prove Theorem \ref{mainthm}.
We consider the open ellipsoid
\[
E :=
E(r_1,\ldots,r_n)=
\left\{\frac{x_1^2+y_1^2}{r_1^2}+\ldots+\frac{x_n^2+y_n^2}{r_n^2}<1\right\}
\]
in $\R^{2n}$
with symplectic half axes $1=r_1\leq\ldots\leq r_n$.
We assume that two of the axes are different
and that $R:=r_n$ satisfies $R<\!\!\sqrt[2n]{2}$.
We define a family of open ellipsoidal shells
\[
U_{\varepsilon}:=(1+\varepsilon)E\setminus\overline{(1-\varepsilon)E}
\]
for $\varepsilon\in(0,1)$.
We claim that the function
$\varepsilon\mapsto s(U_{\varepsilon})$
jumps at
\[
\varepsilon_0 :=\frac{R-1}{R+1}\;.
\]
In the first version of this article
we proved this in dimension $4$
using holomorphic curves.
The argument given here
that is due to the unknown referee
uses Ekeland-Hofer capacities instead
and works in all dimensions $\geq 4$.

\begin{proof}[{\bf Proof of Theorem \ref{mainthm}}]
We consider a symplectic embedding
$\varphi$ of the sphere $S^{2n-1}_r$
into $U_{\varepsilon}$.
We denote the image by $S=\varphi(S^{2n-1}_r)$.
Let $D$ be the bounded component of
$\R^{2n}\setminus S$.

We consider two cases.
Assume that $D\subset U_{\varepsilon}$.
We obtain $|D|<|U_{\varepsilon}|$ for the volume.
Stokes's theorem implies
\[
|D|=\frac{1}{n!}\int_S\lambda\wedge\wst^{n-1}
\]
for a primitive $\lambda$
of $\wst$
and the orientation of $S$ as boundary of $D$.
The integral
is the {\bf helicity} of the
pair $(S,\wst|_{TS})$,
which does not depend
on the choice of the primitive
of $\wst|_{TS}$
provided $n\geq 2$,
see \cite{arkh98} or \cite[p.\ 428]{mcsa98}.
The image of
$\lamst$ under $\varphi$
is a primitive of $\wst$ near $S$.
Because $S$ is the symplectic image
of $S^{2n-1}_r$
the transformation formula yields
$|D|=|B_r|$.
Notice that
the boundary orientation on $S=\partial D$
coincides with the orientation
induced by $\varphi$.
Therefore,
\[
\pi r^2<\sqrt[n]{(1+\varepsilon)^{2n}-(1-\varepsilon)^{2n}}\; \pi R^2\;.
\]

If the domain $D$ is not contained in the shell $U_{\varepsilon}$
we get $(1-\varepsilon)E\subset D$.
A helicity argument as above shows
$1-\varepsilon<r$.
Because $S$ is of restricted contact type
the $n$-th Ekeland-Hofer capacity $c_n^{\eh}$ of $D$
equals a positive integer multiple
of the action of a closed characteristic on $S$,
see \cite[Proposition 2]{ekho90},
i.e.,
there exists a positive integer $m$
such that $c_n^{\eh}(D)=m\pi r^2$.
Because the extrinsic capacity $c_n^{\eh}$
is monotone w.r.t. inclusions,
$D\subset (1+\varepsilon)E$ implies
that $\sqrt{m}\;r\leq (1+\varepsilon)R$.
Combining this with 
$1-\varepsilon<r$ yields
\[\sqrt{m}<\frac{1+\varepsilon}{1-\varepsilon}R.\;\]
The right hand side
is smaller than $\sqrt{2}\;$ if and only if
$\varepsilon<\varepsilon_1$,
where
\[\varepsilon_1=\frac{\sqrt{2}-R}{\sqrt{2}+R}\;.\]
Observe that $\varepsilon_0<\varepsilon_1$
because $R<\sqrt[4]{2}$.
We choose $\varepsilon\in(0,\varepsilon_1]$.
Then $m=1$.
In other words,
the $n$-th Ekeland-Hofer capacity
of $D$ equals $\pi r^2$.
With
$(1-\varepsilon)E\subset D$
this implies
$(1-\varepsilon)R\leq r$.
Moreover,
an application of \cite[Theorem 3.1]{geizeh12}
or \cite{swozil12}
to the inclusion $D\subset (1+\varepsilon)E$
gives $r<1+\varepsilon$.
Combining both 
we get
$(1-\varepsilon)R<1+\varepsilon$.
Therefore,
$\varepsilon_0<\varepsilon$.

To sum up we obtain
\[
s(U_{\varepsilon})\leq
\sqrt[n]{(1+\varepsilon)^{2n}-(1-\varepsilon)^{2n}}\; \pi R^2\;.
\]
provided
$\varepsilon\in (0,\varepsilon_0]$.
The spheres of radius $\frac{2R}{R+1}$
are contained in the closure of
$U_{\varepsilon_0}$
so that
we have the lower bound
\[
\frac{4\pi R^2}{(R+1)^2}\leq s(U_{\varepsilon})
\]
provided
$\varepsilon>\varepsilon_0$.
Consequently, we get
the following estimate
\[
s(U_{\varepsilon_0})
\leq
\sqrt[n]{R^{2n}-1}\;\frac{4\pi R^2}{(R+1)^2}<
s(U_{\varepsilon})
\]
because $R<\sqrt[2n]{2}$.
In other words,
the function
$\varepsilon\mapsto s(U_{\varepsilon})$
is not continuous at $\varepsilon_0$.
This proves Theorem \ref{mainthm}.
\end{proof}

\begin{rem}
  The helicity argument shows
  that the local Liouville vector field
  of a closed connected
  contact type hypersurface
  $(M,\alpha)$
  in an exact symplectic manifold
  that bounds a relative compact domain $D$
  points out of $D$.
  The helicity is taken w.r.t.\ $(M,\rmd\alpha)$
  and coincides with the contact volume of $(M,\alpha)$.
\end{rem}


\section{The shell capacity\label{jankogenial}}

Recall that for $\delta\in(0,1)$
the $\delta$-shell capacity
is 
\[
c_{\delta}(V,\omega):=\sup\{\pi r^2\,|\,
\text{$\exists$ symplectic embedding
  $r A_{\delta}\hookrightarrow (V,\omega)$}\}\;,
\]
where
\[
A_{\delta}=
\overline{B_1}\setminus B_{1-\delta}\subset\R^{2n}\;.
\]

\begin{proof}[{\bf Proof of Theorem \ref{janko1}}]
  Consider a symplectic embedding $\varphi$
  of $r A_{\delta}$ into $U_{\varepsilon}$,
  where
  \[
  U_{\varepsilon}=
  B_1\setminus\overline{B_{1-\varepsilon}}\subset\R^{2n}\;,
  \]
  $\varepsilon\in(0,1)$.
  Set 
  \[
  S_{r(1-\delta)}=\varphi(rS^{2n-1}_{1-\delta})
  \qquad\text{and}\qquad
  S_r=\varphi(S_r^{2n-1})\;.
  \]
  If the bounded component of $\R^{2n}\setminus S_r$
  is contained in $U_{\varepsilon}$
  a helicity argument w.r.t.\ $S_r$
  as in Theorem \ref{mainthm}
  gives $|B_r|<|U_{\varepsilon}|$.
  Hence,
  \[
  r^2<\sqrt[n]{1-(1-\varepsilon)^{2n}}\;.
  \]
  On the other hand
  if the bounded component of
  $\R^{2n}\setminus S_{r(1-\delta)}$
  contains $B_{1-\varepsilon}$
  we obtain
  $|B_{1-\varepsilon}|<|rB_{1-\delta}|$
  by a similar argument.
  Since the bounded component
  of $\R^{2n}\setminus S_r$
  is contained in $B_1$
  we have
  $r^{2n}|B_1|<|B_1|$.
  Hence, $1-\varepsilon<r(1-\delta)$ and $r<1$.
  This yields $\varepsilon>\delta$.
  Because $U_{\delta}$ is contained in $U_{\varepsilon}$
  provided $\delta\leq\varepsilon$
  the alternative case appears precisely if $\varepsilon>\delta$.
  Therefore,
  \[
  c_{\delta}(U_{\varepsilon})\leq\sqrt[n]{1-(1-\delta)^{2n}}\;\pi
  \]
  if $\varepsilon\leq\delta$ and
  $c_{\delta}(U_{\varepsilon})=\pi$
  if $\varepsilon>\delta$.
  This proves Theorem \ref{janko1}.
\end{proof}

\begin{rem}
  If the dimension equals $2n=2$
  the helicity argument from Theorem \ref{mainthm}
  does not apply.
  In fact, the annulus
  $rU_{\delta}=U_{\varepsilon}$ embeds into
  $U_{\varepsilon}\setminus\R^+$ preserving the area,
  see \cite{grsh79,damo90}
  and cf.\ \cite[p.\ 53]{hoze94}.
  Therefore, the function
  $\varepsilon\mapsto c_{\delta}(U_{\varepsilon})$ is continuous.
  The same argument shows that
  $c_{\delta}$ is not normalized.
\end{rem}

\begin{proof}[{\bf Proof of Theorem \ref{janko2}}]
  The proof is based on the following observation:
  Let $S\subset\C P^n$ be a symplectic image of
  the sphere of radius $\varrho$.
  Because $\C P^n$
  has trivial homology in degree $2n-1$
  the complement of $S$
  has two connected components.
  The second de Rham cohomology
  of $\C P^n$ is generated
  by the class of the Fubini-Study form
  and splits corresponding to the components
  of the complement of $S$.
  This follows with the Mayer-Vietoris sequence.
  Denote by $D$ the domain in $\C P^n$
  which bounds $S=\partial D$
  and on which the Fubini-Study form
  has a primitive.
  An application of the helicity argument
  from Theorem \ref{mainthm}
  to $(S,\omega_{\FS}|_{TS})$
  shows that
  the volume of $D$ taken w.r.t.\
  $\frac{1}{n!}\omega_{\FS}^n$
  equals $|B_{\varrho}|$.
  We call $D$ the {\bf interior component}
  of $\C P^n\setminus S$.
  
  If $\varphi$ is a symplectic embedding
  of $rA_{\delta}$ into
  $V_{\varepsilon}=\C P^n\setminus\overline{B_{1-\varepsilon}}$
  then either the interior component of
  $\varphi(S_r^{2n-1})$ is contained
  in  $V_{\varepsilon}$
  or $B_{1-\varepsilon}$
  is contained in
  the interior component of
  $\varphi(rS^{2n-1}_{1-\delta})$.
  In view of this alternative
  the computations made in the proof of
  Theorem \ref{janko1}
  show discontinuity of
  $\varepsilon\mapsto c_{\delta}(V_{\varepsilon})$
  at $\delta$.
\end{proof}

\begin{cor}
  $s(\C P^n)=\pi$.
\end{cor}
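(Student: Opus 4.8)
The plan is to prove the two inequalities $s(\C P^n)\ge\pi$ and $s(\C P^n)\le\pi$ separately and then combine them.

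For the lower bound I would exploit the normalisation of $\omega_{\FS}$ directly. By construction the complement $\C P^n\setminus\C P^{n-1}$ is the symplectic image of the open unit ball $B_1$. Restricting this embedding first to $B_r$ and then to a small annular neighbourhood of $S^{2n-1}_r$ yields a symplectic embedding of the sphere $S^{2n-1}_r$ into $\C P^n$ for every $r<1$. Hence $s(\C P^n)\ge\pi r^2$ for all $r<1$, and therefore $s(\C P^n)\ge\pi$.

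For the upper bound I would argue that every symplectically embedded sphere in $\C P^n$ has radius strictly smaller than $1$. Let $\varphi$ be a symplectic embedding of $S^{2n-1}_r$ into $\C P^n$ and set $S=\varphi(S^{2n-1}_r)$. By the observation made in the proof of Theorem \ref{janko2}, the complement $\C P^n\setminus S$ has exactly two connected components, and the helicity computation there gives $|D|=|B_r|$ for the interior component $D$, where the volume is taken with respect to $\tfrac1{n!}\omega_{\FS}^n$. On the other hand, since $\C P^n\setminus\C P^{n-1}$ is symplectomorphic to $B_1$ and $\C P^{n-1}$ has measure zero, the total volume of $\C P^n$ equals $|B_1|$. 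As the second component of $\C P^n\setminus S$ is open and non-empty and $S$ has measure zero, the domain $D$ is a proper subset of $\C P^n$, so $|B_r|=|D|<|B_1|$ and hence $r<1$. Taking the supremum over all such embeddings gives $s(\C P^n)\le\pi$, and together with the lower bound this proves $s(\C P^n)=\pi$.

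I do not expect a real obstacle here: the argument reuses the helicity (volume) computation already carried out in Theorems \ref{mainthm} and \ref{janko2}, the only extra ingredient being that the chosen normalisation makes the total Fubini--Study volume of $\C P^n$ coincide with $|B_1|$. The one point that needs a moment's care is that the interior component $D$ is \emph{strictly} smaller than $\C P^n$, which is exactly what forces the strict inequality $r<1$; in contrast to Theorem \ref{mainthm}, neither Ekeland--Hofer capacities nor holomorphic curves are needed for the upper bound.
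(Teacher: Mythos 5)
Your proof is correct and follows exactly the route the paper intends: the corollary is stated right after the proof of Theorem \ref{janko2} precisely because the ``interior component'' observation there (helicity gives $|D|=|B_r|$, while the normalisation gives $|\C P^n|=|B_1|$) forces $r<1$, and the unit-ball embedding gives the matching lower bound. No gap.
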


For $t>0$ let $U_t$
be the monotone symplectic manifold
obtained by blowing up
the unit ball in $B_{1+t}\subset\R^{2n}$.
Let $\tau\in(0,\sqrt[2n]{2}-1)$
and define
\[
c^{\tau}(W,\omega):=\sup\big\{\pi r^2\,|\,
\text{$\exists$ symplectic embedding
  $r\big(\overline{B_{1+\tau}}\setminus B_1\big)\hookrightarrow (W,\omega)$}\big\}\;.
\]

\begin{proof}[{\bf Proof of Theorem \ref{janko3}}]
  Consider a symplectic embedding
  $\varphi$ of $r\big(\overline{B_{1+\tau}}\setminus B_1\big)$
  into $U_t$.
  Denote the bounded component of
  $V\setminus S_r$
  by $D_r$,
  where $S_r$ is the image $\varphi(S^{2n-1}_r)$.
  The symblols
  $D_{r(1+\tau)}$ and $S_{r(1+\tau)}$
  are understood similarly.
  Notice that the symplectic form
  $\omega$ is exact on $D_r$
  if and only if
  $\omega$ is exact on $D_{r(1+\tau)}$.
  
  If $\omega$ is exact on $D_r$
  we get
  \[
  \pi r^2<\sqrt[n]{(1+t)^{2n}-1}\; \pi\;.
  \]
  In the alternative case
  $\omega$ is exact on $V\setminus D_{r(1+\tau)}$
  because the second de Rham cohomology
  is generated by the class of $\omega$.
  Consider $\bar{U}_t\setminus D_{r(1+\tau)}$,
  which is a symplectic cobordism
  with negative end $S_{r(1+\tau)}$
  and positive end $\partial B_{1+t}$,
  see \cite{mcd91,geizeh12}.
  The minimal action $\pi r^2(1+\tau)^2$
  on $S_{r(1+\tau)}$
  is bounded by $\pi (1+t)^2$,
  see \cite[Theorem 3.1]{geizeh12}.
  Hence, $r(1+\tau)<1+t$.
  
  We claim that $1<r$.
  Arguing by contradiction
  we suppose that $1\geq r$.
  Observe that the second homology
  of $D_r$ is generated by the class
  on which $\omega$ integrates to $\pi$.
  The proof of
  \cite[Theorem 6.4]{geizeh12}
  shows that
  $S_r$ has non-trivial homology
  in degree two
  because the assumption $1\geq r$
  excludes bubbling-off
  of the relevant moduli spaces
  of holomorphic spheres.
  This is a contradiction.
  We get $1<r$.
  Therefore, $\tau<t$.
  
  In other words
  $c^{\tau}(U_t)\geq\pi$ if $t\geq \tau$
  and
  \[
  c^{\tau}(U_t)\leq\sqrt[n]{(1+\tau)^{2n}-1}\; \pi
  \]
  provided that $t<\tau$.
  This proves Theorem \ref{janko3}.
\end{proof}


\section{A comparison result\label{acompres}}

The spherical capacity is a variant
of the regular coisotropic capacity of hypersurfaces,
see \cite{swozil12}.
Consider a closed
hypersurface $M$ in a symplectic manifold $(V,\omega)$
such that all characteristics are closed,
form a smooth fibration over the leaf space with fibre $S^1$,
and are contractible in $V$.
Let $\inf(M)$ denote the least positive symplectic area of
a smooth disc in $V$ with boundary on a closed characteristic of $M$.
Set 
\[
a(V,\omega):=\sup\{\inf(M)\,|\,
\text{$M\subset(V,\omega)$}\}\;,
\]
where the supremum runs over all hypersurfaces as described.
This defines a normalized capacity on all symplectically
aspherical symplectic manifolds
and is called
the {\bf regular coisotropic hypersurface capacity},
see \cite{swozil12}.
The restriction to spheres is denoted by $a_S$
and we get
\[
c_B\leq s\leq a_S\leq a\;.
\]

A related capacity is the
{\bf contact type embedding capacity}
\[
c(V,\omega):=\sup\{\inf(\alpha)\,|\,
\text{$\exists$ contact type embedding
  $(M,\alpha)\hookrightarrow (V,\omega)$}\}\;,
\]
see \cite{geizeh13,geizeh12}.
The supremum is taken over all closed
contact manifolds $(M,\alpha)$ of dimension $2n-1$, 
where $\inf(\alpha)$ is the infimum of all positive periods of closed Reeb orbits
w.r.t.\ the contact form $\alpha$.
A contact type
embedding $j\co (M,\alpha)\hookrightarrow (V,\omega)$
is an embedding
such that there
exists a Liouville vector field $Y$
w.r.t.\ $\omega$ defined near $j(M)$
such that $j^*(i_Y\omega)=\alpha$.
If one restricts
to contact manifolds
which are diffeomorphic to the $(2n-1)$-dimensional
sphere
one obtains a normalized capacity $c_S$.
The capacities $a_S$ and $c_S$
yield a proof
of the spherical non-squeezing theorem
and we have
\[
c_B\leq s\leq c_S\leq c\;.
\]

\begin{defn}
  A symplectic embedding of the boundary of an ellipsoid $E$
  with positive symplectic half axes $r_1\leq\ldots\leq r_n$
  is a symplectic embedding of a neighbourhood of
  \[
  \partial E=
  \left\{
    \frac{x_1^2+y_1^2}{r_1^2}+\ldots+\frac{x_n^2+y_n^2}{r_n^2}=1
  \right\}
  \subset\R^{2n}\;.
  \]
  For symplectic manifolds $(V,\omega)$ of dimension $\geq4$
  and arbitrary ellipsoids $E$
  we call 
  \[
  e(V,\omega):=\sup\{\pi r^2_1\,|\,
  \text{$\exists$ symplectic embedding
    $\partial E\hookrightarrow (V,\omega)$}\}
  \]
  the {\bf ellipsoidal capacity}.
\end{defn}

Notice that
\[
s\leq e\leq c_S\;.
\]
The question which now appears is
whether the capacities $s$ and $e$ coincide.

\begin{thm}
  \label{mainthm2}
  The boundary of an ellipsoid with two
  different symplectic half axes
  has a neighbourhood $U\subset\R^{2n}$
  such that $s(U)<e(U)$.
\end{thm}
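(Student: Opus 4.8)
The plan is to reuse the ellipsoid $E$ of Section~\ref{does} — which by assumption has two different symplectic half axes $1=r_1\le\ldots\le r_n=R<\sqrt[2n]{2}$ — together with its ellipsoidal shells $U_\varepsilon=(1+\varepsilon)E\setminus\overline{(1-\varepsilon)E}$, and to show that for suitable $\varepsilon$ the single neighbourhood $U=U_\varepsilon$ already satisfies $s(U)<e(U)$. Writing $Q$ for the quadratic form with $E=\{Q<1\}$, one has $\partial E=\{Q=1\}\subset\{(1-\varepsilon)^2<Q<(1+\varepsilon)^2\}=U_\varepsilon$, so $U_\varepsilon$ is an open neighbourhood of $\partial E$ in $\R^{2n}$ and the inclusion restricts to a symplectic embedding of a neighbourhood of $\partial E$ into $U_\varepsilon$. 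Since $2n\ge4$, the capacities $s$ and $e$ are defined and $s\le e$; the content of the theorem is the \emph{strict} inequality.

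First I would bound $e(U_\varepsilon)$ from below. The ellipsoidal capacity is normalised by the smallest half axis, so the symplectic embedding $\partial E\hookrightarrow U_\varepsilon$ given by the inclusion shows
\[
e(U_\varepsilon)\ge\pi r_1^2=\pi
\]
for every $\varepsilon\in(0,1)$.

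Next I would bound $s(U_\varepsilon)$ from above, and here the work has already been done: the proof of Theorem~\ref{mainthm} shows that for $\varepsilon\in(0,\varepsilon_0]$, with $\varepsilon_0=(R-1)/(R+1)$, the ``large-domain'' case is excluded, so every symplectically embedded sphere in $U_\varepsilon$ satisfies
\[
s(U_\varepsilon)\le\sqrt[n]{(1+\varepsilon)^{2n}-(1-\varepsilon)^{2n}}\;\pi R^2 .
\]
As $\varepsilon\to0^+$ the right-hand side tends to $0$ while $\varepsilon_0>0$ is fixed, so one may choose $\varepsilon\in(0,\varepsilon_0]$ with $\sqrt[n]{(1+\varepsilon)^{2n}-(1-\varepsilon)^{2n}}\,R^2<1$. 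For this $\varepsilon$ one gets $s(U_\varepsilon)<\pi\le e(U_\varepsilon)$, and setting $U=U_\varepsilon$ proves the theorem.

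There is no genuinely new argument needed; the proof is essentially a corollary of Theorem~\ref{mainthm}. The only points to check carefully are that $U_\varepsilon$ really is a neighbourhood of $\partial E$ (immediate from the description by $Q$) and that one is invoking the \emph{full} dichotomy of Theorem~\ref{mainthm}: for thin shells the surviving case is the volume case, whose bound is $o(1)$, whereas the excluded case — ruled out precisely via the $n$-th Ekeland–Hofer capacity and the hypothesis $R<\sqrt[2n]{2}$ — would otherwise permit embedded spheres of radius close to $1$ and leave $s(U_\varepsilon)$ only $\lesssim\pi$, which would not separate $s$ from $e$. Thus the ``hard part'' is merely bookkeeping: ensuring the strict inequality survives, which it does because the thin-shell bound on $s$ drops below $\pi$ while $e$ stays $\ge\pi$.
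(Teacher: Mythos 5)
Your overall strategy --- bound $e(U_\varepsilon)$ from below by $\pi r_1^2$ via the inclusion of $\partial E$, and show that the upper bound on $s(U_\varepsilon)$ coming from the ``small domain'' (volume/helicity) case tends to $0$ as the shell thins --- is exactly the structure of the paper's proof, and within the setting you chose it is correct. The problem is the setting: Theorem \ref{mainthm2} is a statement about \emph{every} ellipsoid with two different symplectic half axes, whereas you import the ellipsoid of Section \ref{does} together with its extra standing hypothesis $R=r_n<\sqrt[2n]{2}$ (after normalising $r_1=1$). That hypothesis is not cosmetic: the step you rely on to exclude the ``large domain'' case for thin shells is the Ekeland--Hofer argument of Theorem \ref{mainthm}, which needs $\frac{1+\varepsilon}{1-\varepsilon}R<\sqrt{2}$ to force $m=1$ and hence genuinely uses a bound on the eccentricity. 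For an ellipsoid with, say, $r_n/r_1\geq\sqrt{2}$ your argument does not rule out the case where the bounded component $D$ contains $(1-\varepsilon)E$, and so gives no upper bound on $s(U_\varepsilon)$ better than roughly $\pi r_n^2$, which does not separate $s$ from $e$.

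The paper closes this gap by replacing the Ekeland--Hofer step with the spherical non-squeezing theorem: in the large-domain case the embedded sphere $S$ lies in the cylinder $(1+\varepsilon)Z_{r_1}$, so $r<(1+\varepsilon)r_1$ by \cite{geizeh12}, while the helicity/volume comparison with $(1-\varepsilon)E\subset D$ gives $(1-\varepsilon)^n r_1\cdots r_n<r^n$. Combining the two yields
\[
\left(\frac{1-\varepsilon}{1+\varepsilon}\right)^n<\frac{r_1^n}{r_1\cdots r_n}<1\;,
\]
where the last inequality uses only that two half axes differ; this forces $\varepsilon>\varepsilon_0$ for some $\varepsilon_0>0$ depending only on the $r_j$, with no restriction on the eccentricity. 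To repair your write-up, either restate the theorem for the restricted class of ellipsoids you treat, or substitute this non-squeezing argument for the appeal to the proof of Theorem \ref{mainthm}.
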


\begin{proof}
  Let $E$ be an ellipsoid as in the theorem.
  Set $U_{\varepsilon}=(1+\varepsilon)E\setminus\overline{(1-\varepsilon)E}$.
  We have $\pi r_1^2\leq e(U_{\varepsilon})$.

  On the other hand $s(U_{\varepsilon})\ra 0$.
  Indeed,
  consider a symplectic embedding $\varphi$ of $S^{2n-1}_r$ into $U_{\varepsilon}$.
  Denote by $S=\varphi(S^{2n-1}_r)$ the image.
  If the bounded component $D$ of $\R^{2n}\setminus S$
  is contained in $U_{\varepsilon}$
  the volume of $D$ tends to zero.
  If alternatively $D$ contains $(1-\varepsilon) E$
  we argue as follows:
  Comparing the volume we get a lower bound
  \[
  (1-\varepsilon)^nr_1\cdot\ldots\cdot r_n<r^n
  \]
  invoking the helicity.
  For an upper bound observe that $S\subset (1+\varepsilon)Z_{r_1}$.
  Invoking the spherical non-squeezing theorem we get
  $r<(1+\varepsilon)r_1$,
  see \cite{geizeh12}.
  Combining both inequalities yields
  \[
  \left(\frac{1-\varepsilon}{1+\varepsilon}\right)^n<
  \frac{r_1^n}{r_1\cdot\ldots\cdot r_n}\;.
  \]
  Because the $r_j$ are not all the same
  the right hand side is $<1$.
  Therefore, there exists a positive number $\varepsilon_0$,
  which only depends on the $r_j$,
  such that $\varepsilon>\varepsilon_0$.
  The claim follows now by taking $\varepsilon\leq\varepsilon_0$.  
\end{proof}

\begin{rem}
  Both quantities $s$ and $c_S$
  do not define capacities in dimension $2$.
  Since they satisfy the monotonicity axiom
  they would otherwise measure
  the area of the annuli
  $B_{1+\varepsilon}\setminus\overline{B}_{1-\varepsilon}$
  in $\R^2$,
  see \cite{hoze94}.
  A direct argument can be obtained as follows:
  For $s$ observe
  that $(r,\theta)\mapsto(\sqrt{r^2+a},\theta)$
  maps $S^1=\partial B$ symplectically
  to the circle of radius $\sqrt{1+a}$
  for all $a\in (-1,\infty)$.
  For $c_S$ consider the contact form
  $\tfrac12(r^2+a)d\theta$ on $S^1$.
  Its smallest action equals $(1+a)\pi$.
  
  On the other hand,
  one can measure the greatest minimal action
  of embeddings of restricted contact type
  which have image in a certain open subset.
  This results in an extrinsic normalized capacity
  in dimension $2$ as well,
  cf.\ \cite{geizeh13}.
\end{rem}


\begin{ack}
  We thank the unknown referee and Janko Latschev
  for providing us with the improvements
  as well as Urs Frauenfelder,
  Leonid Polterovich, Felix Schlenk,
  and Jan Swoboda their interest in this work.
  The project was initiated
  during the conference
  \emph{From conservative dynamics to symplectic and contact topology}
  from 30 Jul. 2012 through 3 Aug. 2012 at the Lorentz Center in Leiden.
  We would like to thank the organizers
  Hansj\"org Geiges,
  Viktor Ginzburg,
  Federica Pasquotto,
  Bob Rink, and
  Robert Vandervorst
  for many stimulating discussions.
\end{ack}

\end{document}